\title{Counting the Palstars}
\author{L. Bruce Richmond\\
Combinatorics and Optimization\\
University of Waterloo \\
Waterloo, ON  N2L 3G1 \\
Canada \\
{\tt lbrichmo@uwaterloo.ca} \\
\ \\
Jeffrey Shallit\\
School of Computer Science\\
University of Waterloo \\
Waterloo, ON  N2L 3G1 \\
Canada \\
{\tt shallit@cs.uwaterloo.ca} }
\begin{document}

\maketitle

\theoremstyle{plain}
\newtheorem{theorem}{Theorem}
\newtheorem{corollary}[theorem]{Corollary}
\newtheorem{lemma}[theorem]{Lemma}
\newtheorem{proposition}[theorem]{Proposition}

\theoremstyle{definition}
\newtheorem{definition}[theorem]{Definition}
\newtheorem{example}[theorem]{Example}
\newtheorem{conjecture}[theorem]{Conjecture}

\theoremstyle{remark}
\newtheorem{remark}[theorem]{Remark}

\begin{abstract}
A palstar (after Knuth, Morris, and Pratt) is a concatenation of
even-length palindromes.
We show that, asymptotically, there are $\Theta(\alpha_k^n)$ palstars of
length $2n$ over a $k$-letter alphabet, where $\alpha_k$ is a constant
such that $2k-1 < \alpha_k < 2k-{1 \over 2}$.  In particular, 
$\alpha_2 \doteq 3.33513193$.
\end{abstract}

\section{Introduction}
We are concerned with finite strings over a finite alphabet $\Sigma_k$
having $k \geq 2$ letters.   A {\it palindrome} is a string $x$
equal to its reversal $x^R$, like the English word {\tt radar}.  
If $T, U$ are sets of strings over $\Sigma_k$ then (as usual)
$TU = \lbrace tu \ : \ t \in T, u \in U \rbrace$.  Also
$T^i = \overbrace{TT \cdots T}^i$ and 
$T^* = \bigcup_{i \geq 0} T^i$ and $T^+ = \bigcup_{i \geq 1} T^i$.

We define 
$$P = \lbrace x\, x^R \ : \ x \in \Sigma_k^+ \rbrace,$$
the language of nonempty even-length palindromes.  Following
Knuth, Morris, and Pratt \cite{Knuth&Morris&Pratt:1977}, we call a string $x$
a {\it palstar} if it belongs to 
$P^*$, that is, if it can be written as the concatenation of elements of $P$.
Clearly every palstar is of even length.

We call $x$ a 
{\it prime palstar} if it is a nonempty palstar, but not the concatenation 
of two or more palstars; alternatively, if $x \in P^+ - P^2 P^* $ where
$-$ is set difference.
Thus, for example, the 
the English word {\tt noon} is a prime palstar, but the English
word {\tt appall} and the 
French word {\tt assailli} are palstars that are not prime.
Knuth, Morris, and Pratt \cite{Knuth&Morris&Pratt:1977}
proved that no prime palstar is a proper prefix
of another prime palstar, and, consequently, every palstar has a unique
factorization as a concatenation of prime palstars.

A nonempty string $x$ is a {\it border} of a string $y$ if
$x$ is both a prefix and a suffix of $y$ and $x \not= y$.  
We say a string $y$ is {\it bordered} if it has a border.
Thus, for example, the English word {\tt ionization}
is bordered with border {\tt ion}.  Otherwise a word is {\it unbordered}.
Rampersad et al.~\cite{Rampersad&Shallit&Wang:2011}
recently gave a bijection between
the unbordered strings of length $n$ and the prime palstars of length
$2n$.   As a consequence they obtained a formula for the number of prime
palstars.

Despite some interest in the palstars themselves
\cite{Manacher:1975,Galil&Seiferas:1978}, it seems no one has
enumerated them.  Here we observe that bijection mentioned previously,
together with the unique factorization of palstars, provides an
asymptotic enumeration for the number of palstars.

\section{Generating function for the palstars}

Again, let $k \geq 2$ denote the size of the alphabet.
Let $p_k(n)$ denote the number of palstars of length $2n$, and
let $u_k (n)$ denote the number of unbordered strings
of length $n$.

\begin{lemma}
For $n \geq 1$ and $k \geq 2$ we have
$$ p_k (n) = \sum_{1 \leq i \leq n} u_k(i) p_k(n-i) .$$
\label{one}
\end{lemma}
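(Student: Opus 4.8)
The plan is to decompose an arbitrary nonempty palstar according to the first block in its unique prime-palstar factorization. First I would recall the two ingredients supplied in the introduction: by the theorem of Knuth, Morris, and Pratt, every palstar factors uniquely as a concatenation of prime palstars; and by the bijection of Rampersad et al., the number of prime palstars of length $2i$ is exactly $u_k(i)$. I would also fix the natural convention $p_k(0) = 1$, counting the empty string as the unique palstar of length $0$, it being the empty concatenation of elements of $P$.

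Next I would set up the relevant bijection. Given a palstar $x$ of length $2n$ with $n \ge 1$, write its unique prime-palstar factorization $x = q_1 q_2 \cdots q_m$ with $m \ge 1$; each $q_j$ has even length, say $|q_1| = 2i$, and since $x$ is nonempty and $q_1$ is a prefix of $x$ we have $1 \le i \le n$. Then $x' := q_2 \cdots q_m$ is a palstar of length $2(n-i)$, the empty palstar when $i = n$. Map $x \mapsto (q_1, x')$. Conversely, given a prime palstar $q$ of length $2i$ with $1 \le i \le n$ and a palstar $x'$ of length $2(n-i)$, form $qx'$, which lies in $P^+ P^* \subseteq P^*$ and so is a palstar of length $2n$.

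The step that needs the most care, though it is short, is checking that these two maps are mutually inverse, which amounts to showing that $q$ is the first block of the prime factorization of $qx'$. This follows from uniqueness: if $x' = q_2 \cdots q_m$ is the prime factorization of $x'$, then $q\, q_2 \cdots q_m$ is a factorization of $qx'$ into prime palstars, hence by the uniqueness theorem it \emph{is} the prime factorization of $qx'$; thus the first block is indeed $q$ and deleting it returns $x'$. The reverse composition is immediate.

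Finally I would count. Partitioning the palstars of length $2n$ by the length $2i$ of the first prime block, the bijection gives
$$ p_k(n) = \sum_{1 \le i \le n} \bigl( \#\{\text{prime palstars of length } 2i\} \bigr)\, p_k(n-i) = \sum_{1 \le i \le n} u_k(i)\, p_k(n-i),$$
where the second equality uses the Rampersad et al.\ bijection. The only genuine obstacle is the bookkeeping around the empty palstar and confirming that the decomposition is a true bijection rather than merely an injection or a surjection; all the combinatorial content is carried by the KMP unique-factorization theorem.
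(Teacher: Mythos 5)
Your proposal is correct and follows essentially the same route as the paper: peel off the first block of the unique prime-palstar factorization, count prime palstars of length $2i$ by $u_k(i)$ via the Rampersad--Shallit--Wang bijection, and sum over $i$. You simply spell out the bijection check that the paper leaves implicit in its appeal to uniqueness of factorization.
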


\begin{proof}
Consider a palstar of length $2n > 0$.  Either it is a prime palstar,
and by \cite{Rampersad&Shallit&Wang:2011}
there are $u_k(n) = u_k (n) p_k (0)$ of them,
or it is the concatenation of two or
more prime palstars.  In the latter case,
consider the length of this first factor;
it can potentially be $2i$ for $1 \leq i \leq n$.  Removing this first
factor, what is left is 
also a palstar.  This gives $u_k (i) p_k (n-i)$ distinct palstars
for each $i$.  Since factorization into prime palstars
is unique, the result follows.
\end{proof}

Now we define generating functions as follows:
\begin{eqnarray*}
P_k (X)  &=& \sum_{n \geq 0} p_k (n) X^n \\
U_k (X) &=& \sum_{n \geq 0} u_k (n) X^n .
\end{eqnarray*}

The first few terms are as follows:
\begin{eqnarray*}
P_k (X) &=&  1 + kX + (2k^2-k)X^2 + (4k^3-3k^2)X^3 + (8k^4-8k^3+k)X^4 + \cdots \\
U_k (X)  &=& 1 + kX + (k^2-k)X^2 + (k^3-k^2)X^3 + (k^4-k^3-k^2+k)X^4 + \cdots  .
\end{eqnarray*}

\begin{theorem}
$$P_k (X) = {1 \over {2-U_k (X)}}.$$
\label{two}
\end{theorem}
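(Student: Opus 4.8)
The plan is to convert the recurrence of Lemma~\ref{one} into an identity of formal power series. First I would fix the conventions $p_k(0) = 1$ (the empty string is the empty palstar) and $u_k(0) = 1$ (the empty string is vacuously unbordered, having no nonempty proper prefix), which are consistent with the displayed initial terms of $P_k$ and $U_k$. The sum in Lemma~\ref{one} has the shape of a convolution but with the $i = 0$ term missing, so the key first step is to restore it: for $n \geq 1$,
$$\sum_{0 \leq i \leq n} u_k(i)\, p_k(n-i) = u_k(0)\, p_k(n) + \sum_{1 \leq i \leq n} u_k(i)\, p_k(n-i) = p_k(n) + p_k(n) = 2 p_k(n),$$
where the first equality uses $u_k(0) = 1$ and the second uses Lemma~\ref{one}. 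For $n = 0$ the same sum collapses to $u_k(0)\, p_k(0) = 1$.

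Next I would recognize the left-hand side of the display above as exactly the coefficient of $X^n$ in the product $U_k(X)\, P_k(X)$. Summing over all $n \geq 0$ and isolating the $n = 0$ term gives
$$U_k(X)\, P_k(X) = 1 + \sum_{n \geq 1} 2 p_k(n) X^n = 1 + 2\bigl(P_k(X) - p_k(0)\bigr) = 2 P_k(X) - 1.$$
Rearranging yields $P_k(X)\bigl(2 - U_k(X)\bigr) = 1$. Since $2 - U_k(X)$ has constant term $2 - u_k(0) = 1 \neq 0$, it is a unit in $\mathbb{Z}[[X]]$, so dividing through gives $P_k(X) = 1/(2 - U_k(X))$, as claimed.

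The argument is essentially bookkeeping, so I do not anticipate a genuine obstacle; the only points requiring care are the handling of the boundary index $i = 0$ (and hence pinning down the convention $u_k(0) = p_k(0) = 1$) and the remark that $2 - U_k(X)$ is invertible as a power series so that the right-hand side is well defined. A proof by induction on $n$ is also possible, but the convolution computation is cleaner and makes the role of the "$2$" in the denominator transparent.
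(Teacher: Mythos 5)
Your proof is correct and follows essentially the same route as the paper: both compute the convolution $U_k(X)P_k(X)$, split off the $i=0$ term, apply Lemma~\ref{one} to get $2P_k(X)-1$, and rearrange. Your added remark that $2-U_k(X)$ is a unit in $\mathbb{Z}[[X]]$ is a harmless extra precision, not a different method.
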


\begin{proof}
From Lemma~\ref{one}, we have
\begin{eqnarray*}
U_k (X) P_k (X) &=& \left(\sum_{n \geq 0} u_k (n) X^n \right) 
\left(\sum_{n \geq 0} p_k (n) X^n \right) \\
&=& 1 + \sum_{n \geq 1} \left(\sum_{0 \leq i \leq n} u_k(i) p_k (n-i) 
\right) X^n \\
&=& 1+ \left( \sum_{n \geq 1} \sum_{1 \leq i \leq n} u_k(i) p_k (n-i) X^n \right) 
	+ \sum_{n \geq 1} p_k(n) X^n \\
&=& 1 + \left( \sum_{n \geq 1} p_k (n) X^n \right) + \sum_{n \geq 1} p_k (n) X^n \\
&=& 2 P_k (X) - 1,
\end{eqnarray*}
from which the result follows immediately.
\end{proof}

\section{The main result}

\begin{theorem}
For all $k \geq 2$ there is a constant $\alpha_k$ with
$2k -1 < \alpha_k < 2k-{1 \over 2}$ such that
the number of palstars of length $2n$ is $\Theta(\alpha_k^n)$.
\end{theorem}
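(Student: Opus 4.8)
The plan is to take $\alpha_k = 1/\rho_k$, where $\rho_k$ will be the unique singularity of $P_k(X)$ on its circle of convergence, and then to pin $\rho_k$ down between $1/(2k-1/2)$ and $1/(2k-1)$. By Theorem~\ref{two}, $P_k(X)=1/(2-U_k(X))$, where $U_k(X)=\sum_{n\ge 0}u_k(n)X^n$ has nonnegative coefficients with $u_k(0)=1$ and $u_k(1)=k$. From the trivial bound $u_k(n)\le k^n$ the series $U_k$ converges for $|X|<1/k$; on the real interval $[0,1/k)$ it is continuous and strictly increasing with $U_k(0)=1$. I will show below that $U_k(1/(2k-1))>2$, and since $1/(2k-1)<1/k$ for $k\ge 2$, the intermediate value theorem then yields a unique $\rho_k\in(0,1/(2k-1))$ with $U_k(\rho_k)=2$. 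Set $\alpha_k:=1/\rho_k$; already $\rho_k<1/(2k-1)$, i.e.\ $\alpha_k>2k-1$.

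Next I would verify that $\rho_k$ is a simple, dominant pole of $P_k$. For $|X|<\rho_k$ we have $|U_k(X)|\le U_k(|X|)<U_k(\rho_k)=2$, so $P_k$ is analytic on that disk; for $|X|=\rho_k$ with $X\ne\rho_k$ the inequality $|U_k(X)|\le\sum_n u_k(n)\rho_k^n=2$ is strict, since equality would force every term $u_k(n)X^n$ to be a nonnegative real (because $u_k(0)=1>0$), hence $X>0$ and so $X=\rho_k$. At $X=\rho_k$ the zero of $2-U_k$ is simple, because $U_k'(\rho_k)\ge k>0$. Writing $2-U_k(X)=(\rho_k-X)V(X)$ with $V$ analytic and nonvanishing on a disk $|X|<\rho_k+\delta$ (possible since $\rho_k<1/k$), we obtain a partial-fraction decomposition $P_k(X)=\dfrac{c}{1-\alpha_k X}+R(X)$ with $c=1/(\rho_k U_k'(\rho_k))>0$ and $R$ analytic on $|X|<\rho_k+\delta$; extracting coefficients gives $p_k(n)=c\,\alpha_k^n+O(\beta^n)$ for some $\beta<\alpha_k$, whence $p_k(n)=\Theta(\alpha_k^n)$. (This is just the usual transfer theorem for meromorphic generating functions.) It then remains to prove $1/(2k-1/2)<\rho_k<1/(2k-1)$, which by monotonicity of $U_k$ is equivalent to
\[
 U_k(1/(2k-1/2)) \;<\; 2 \;<\; U_k(1/(2k-1)).
\]

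The key ingredient for both inequalities is the functional equation
\[
 U_k(X)\;=\;\frac{2-U_k(X^2)}{1-kX}\qquad(|X|<1/k),
\]
which I would obtain from the standard recurrence $u_k(2m)=k\,u_k(2m-1)-u_k(m)$, $u_k(2m+1)=k\,u_k(2m)$ (valid for $m\ge 1$, with $u_k(0)=1$, $u_k(1)=k$) by separating $U_k$ into its even and odd parts. For $X\in(0,1/k)$ it gives
\[
 U_k(X)>2 \iff U_k(X^2)<2kX, \qquad U_k(X)<2 \iff U_k(X^2)>2kX,
\]
which replaces an evaluation of $U_k$ at $X$ by one at the much smaller argument $X^2$. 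Into this I would feed only the crude two-sided bounds $1+kY\le U_k(Y)\le \frac{1}{1-kY}$, valid for $0\le Y<1/k$ (the left one drops nonnegative terms; the right one uses $u_k(n)\le k^n$). At $X=1/(2k-1)$ this yields $U_k(X^2)\le (2k-1)^2/((4k-1)(k-1))$, and the polynomial inequality $(2k-1)^3<2k(4k-1)(k-1)$, whose difference equals $2k^2-4k+1=2(k-1)^2-1>0$ for $k\ge 2$, shows this is $<2k/(2k-1)$; hence $U_k(1/(2k-1))>2$ and $\alpha_k>2k-1$. At $X=1/(2k-1/2)$ it yields $U_k(X^2)\ge 1+kX^2$, and the inequality $1+kX^2>2kX$, i.e.\ $(2k-1/2)^2-2k(2k-1/2)+k>0$, which collapses to $\tfrac14>0$, shows $U_k(X^2)>2kX$; hence $U_k(1/(2k-1/2))<2$ and $\alpha_k<2k-\tfrac12$. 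One should also note that $1/(2k-1)$, $1/(2k-1/2)$ and their squares all lie in $(0,1/k)$ for $k\ge 2$, so the estimates apply.

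Everything past the functional equation is routine polynomial algebra. The one genuinely delicate point is the functional equation itself: the bounds $1+kY\le U_k(Y)\le\frac{1}{1-kY}$ applied directly at $X=1/(2k-1)$ and $X=1/(2k-1/2)$ are too weak (each fails by a margin of order $1/k$), and it is precisely the passage to the argument $X^2$ — where these crude bounds suddenly have plenty of room — that makes the estimate go through. So the main obstacle is recognizing and setting up that reduction; after that, only elementary inequalities remain.
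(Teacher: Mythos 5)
Your proposal is correct, and while it follows the same overall skeleton as the paper (analyze $P_k(X)=1/(2-U_k(X))$, locate the unique positive root $\rho_k$ of $U_k(X)=2$, show it is a simple and dominant pole, then transfer), it handles the two substantive steps quite differently. For the bounds $2k-1<\alpha_k<2k-\tfrac12$, the paper sharpens the coefficient estimates directly — $u_k(n)\le k^n-k^{n-1}$, and a lower bound obtained by subtracting all words with a border of length at most $n/2$ — and sums the resulting series in closed form; you instead invoke Nielsen's exact recurrence $u_k(2m)=k\,u_k(2m-1)-u_k(m)$, $u_k(2m+1)=k\,u_k(2m)$ to get the functional equation $U_k(X)=\bigl(2-U_k(X^2)\bigr)/(1-kX)$, which reduces each evaluation to the squared argument where the crude bounds $1+kY\le U_k(Y)\le 1/(1-kY)$ suffice. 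Your algebra checks out ($2(k-1)^2-1>0$ for $k\ge2$, and the collapse to $\tfrac14>0$), and the trick is elegant, but it rests on a stronger input than the paper's elementary border-counting; the recurrence is classical and is due to Nielsen (already in the bibliography), so cite or prove it. For uniqueness of the singularity on $|X|=\rho_k$, your equality-case-of-the-triangle-inequality argument (equality forces all $u_k(n)X^n$ to be nonnegative reals since $u_k(0)=1$, and then $u_k(1)X=kX$ forces $X=\rho_k$) is markedly shorter and cleaner than the paper's argument via $\cos n\psi=\pm1$ and Dirichlet's approximation theorem. A further small gain: you only need the trivial bound $u_k(n)\le k^n$ to place $\rho_k<1/(2k-1)<1/k$ strictly inside the region of analyticity, whereas the paper appeals to Nielsen's asymptotic $u_k(n)\sim c_kk^n$ for this. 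One minor gloss to tighten: the nonvanishing of your factor $V$ on a disk of radius $\rho_k+\delta$ requires noting that zeros of $2-U_k$ cannot accumulate on the circle $|X|=\rho_k$ (isolated zeros plus compactness), which is exactly the hypothesis of the meromorphic transfer theorem you cite.
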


\begin{proof}
From Theorem~\ref{two} and the
``First Principle of Coefficient Asymptotics''
\cite[p.~260]{Flajolet&Sedgewick:2009}, it follows that
the asymptotic behavior of $[X^n] P_k (X)$, the coefficient of $X^n$ in
$P_k (X)$, is controlled by the behavior of the roots of $U_k (X) = 2$.
Since $u_{k}(0) = 1$ and $U_{k}(X) \rightarrow \infty$ as $X
\rightarrow \infty$, the equation $U_k (X) = 2$
has a single positive real root, which is 
$\rho = \rho_k = \alpha_k^{-1}$.  We first show that
$2k -1 < \alpha_k < 2k-{1 \over 2}$.

Recalling that $u_k(n)$ is the number of unbordered strings of length
$n$ over a $k$-letter alphabet, we see that $u_k(n) \leq k^n - k^{n-1}$
for $n \geq 2$, since $k^n$ counts the total number of strings of
length $n$, and $k^{n-1}$ counts the number of strings with a border
of length $1$.  Similarly
$$u_k(n) \geq \begin{cases}
	k^n - k^{n-1} - \cdots - k^{n/2}, & \text{if $n \geq 2$ is even}; \\
	k^n - k^{n-1} - \cdots - k^{(n+1)/2}, & \text{if $n \geq 2$ is odd},
	\end{cases} 
$$
since this quantity represents removing strings with borders of lengths
$1, 2, \ldots, n/2$ (resp., $1,2, \ldots, (n+1)/2$) if $n$ is even
(resp., odd) from the total number.  Here we use the classical fact
that if a word of length $n$ has a border, it has one of length $\leq n/2$.

It follows that for real $X > 0$ we have
\begin{align*}
U_k (X) &= \sum_{n \geq 0} u_k(n) X^n \\
&= 1 + kX + \sum_{n \geq 2} u_k(n) X^n \\
&\leq 1 + kX + \sum_{n \geq 2} (k^n - k^{n-1}) X^n \\
& = {{kX^2 - 1} \over {kX - 1 }} .
\end{align*}

Similarly for real $X > 0$ we have 
\begin{align*}
U_k(X) & = \sum_{n \geq 0} u_k(n) X^n \\
&= 1 +kX + \sum_{l \geq 1} u_k (2l) X^{2l} +
\sum_{m \geq 1} u_k (2m+1) X^{2m+1} \\
&\geq 1 + kX + \sum_{l \geq 1} (k^{2l} - k^{2l-1} - \cdots - k^l) X^{2l} 
+ \sum_{m \geq 1} (k^{2m+1} - \cdots - k^{m+1}) X^{2m+1} \\
&= {{1-2kX^2} \over {(kX-1)(kX^2-1)}} .
\end{align*}

This gives, for $k \geq 2$, that
$$ 2 < {{(2k-1) (4k^2 - 6k+1) } \over {(k-1)^2 (4k-1)}} \leq
U_k \left({1 \over {2k-1}} \right) $$
and
$$ U_k\left({ 1 \over {2k-{1\over 2}}}\right) \leq {{16k^2-12k+1} \over {(4k-1)(2k-1)}} < 2 .$$
It follows that
${1 \over {2k -{1 \over 2}}} \leq \rho_k \leq {1 \over {2k-1}} $ and hence
$2k -1 < \alpha_k < 2k-{1 \over 2}$.

To understand the asymptotic behavior of $[X^n] P_k (X)$, we need
to rule out other (complex) roots with the same absolute value as
$\rho$.
Suppose there is another solution
$X = \rho e^{i\psi}$ with  $-\pi < \psi < 0$ or $0 < \psi \le \pi $.
Then
$$2 = \sum_{n \ge 0}u_{k}(n)\rho^{n}e^{in\psi} = \sum_{n \ge 0}u_{k}(n)\rho^{n}\cos n\psi + i\sum_{n \ge 0}u_{k}(n)\rho^{n}\sin n\psi.$$
We must have $\sum_{n \ge 0}u_{k}(n)\rho^{n}\sin n\psi = 0$. Hence
$$2 = \sum_{n \ge 0}u_{k}(n)\rho^{n}\cos n\psi.$$
Now if $| \cos n\psi | < 1$ for some $n$ then
$$2 = \left|\sum_{n \ge 0}u_{k}(n)\rho^{n}\cos n\psi \right| \le \sum_{n \ge 0}u_{k}(n)\rho^{n}|\cos n\psi| < \sum_{n \ge 0}u_{k}(n)\rho^{n} = 2 .$$
This is a contradiction, so $|\cos n\psi| = 1$ for all $n$. 
Hence $\cos n\psi = \pm 1$ for all $n$. Thus $n\psi = \pm \pi + 2\pi l_{n}$ for all $n$ and $l_{n}$ is an integer for all $n$. Since $\cos x = 
\cos (-x)$, we may suppose that $0 < \psi \le \pi$. If $\psi = \pi$ then $X = -\rho$. 
But then, using the fact that $u_{k}(1) = k$, we get
$$2 = \sum_{n = 0}^{\infty}u_{k}(n)\rho^{n}(-1)^{n} < \sum_{n = 0}u_{k}(n)\rho^{n} = 2.$$
This contradiction shows that we may suppose $0 < \psi < \pi$.

Suppose $\cos n\psi = \pm 1$ for all $n$. Then
for all $n$
$$n\psi = \pm \pi + 2\pi l_{n}$$
where $l_{n}$ is an integer. Thus $n\psi/\pi = \pm 1 + 2l_{n}$ for all $n$. 
From Dirichlet's diophantine approximation theorem 
(e.g., \cite[Thm.~185]{Hardy&Wright:1971}),
given $\psi/\pi$ and an integer $q \ge 1$,
there are infinitely many $n$ and integers $x$ such that 
$$\left|n\frac{\psi}{\pi} - x\right| < \frac{1}{q}.$$
Thus $x - 1/q < n\psi/\pi < x + 1/q$ and 
$$\left|\pm 1 + 2l_{n} - x\right| < \frac{1}{q}.$$
Choosing $q > 1$ we see that $\pm 1 + 2l_{n} - x$ is an integer
$<1$ in absolute value.  Thus
$\pm 1 + 2l_{n} -x = 0$, and so $n\psi/\pi$ is an 
integer for infinitely many $n$.
Thus $\psi/\pi = p/m$ or $\psi = (p/m)\pi$ and we may suppose $p$ and $m$ are coprime. We have seen that if $X = \rho^{n} e^{nip/m}$ and if $|\cos np/m| < 1$ for any $n$ we have a contradiction. Therefore for all $n$ we have 
$$\cos\left(\frac{n p \pi}{m}\right) = \pm 1.$$
Thus $n p \pi/m = l\pi$ for all $n$. Thus $m$ divides $n$ for all $n$. This is a contradiction if $n = m + 1$. Thus
$$\frac{1}{2 - U_{k}(x)}$$
has only one singularity $x = \rho > 0$ with $|x| = \rho$.

It remains to determine the order of the zero $\rho$.
From above $U_{k}(X) = 2$ has a solution $\alpha_{k}^{-1}$ which
satisfies $2k - 1 < \alpha_{k} < 2k - {1 \over 2}$. 
Nielsen \cite{Nielsen:1973} showed that $u_k (n) \sim c_{k}k^{n}$ for
a constant $c_k$.
Thus $U_{k}(X)$ has radius of convergence $1/k$.
Thus $1/\alpha_{k}$ is in the region where $U_{k}$ is analytic.
Thus $2 - U_{k}(X)$ is analytic at $1/\alpha_{k}$ and has a zero at $1/\alpha_{k}$ of multiplicity $m$.
If $m \ge 2$ then the derivative of $2 - U_{k}(X)$ equals $0$
at $X = 1/\alpha_{k}$.
However $U_{k}^{'}(X) > 0$ since $u_{k}(n) > 0$ for some $n$.
Thus $2 - U_{k}(X)$ has a simple zero at $X = 1/\alpha_{k}$,
and so $P_{k}(X)$ has a simple pole at $X = 1/\alpha_{k}$.
Near $\alpha_{k}^{-1}$ the generating function $ U_{k}(X)$ has the expansion $2 + C_k(X - \alpha_{k}^{-1}) + C'(X - \alpha_{k}^{-1})^{2} + \cdots$ with
$C_k > 0$.
Furthermore
$$P_{k}(X) = \frac{1}{2 - U_{k}(X)} = \frac{1}{-C_k(X - 1/\alpha_{k})
 - C'(X - 1/\alpha_{k})^{2} + \cdots}.$$
Now
$$P_{k}^{'}(X) = \frac{U_{k}^{'}(X)}{(2 - U_{k}(X))^{2}},$$
so there is a positive $\delta$ such that 
$$[X^{n}]P_{k}(X) = [X^{n}]\frac{1}{C_k(1/\alpha_{k} -X)} +\cdots = [X^{n}]\frac{\alpha_{k}}{C_k}\left(\frac{1}{1 - \alpha_{k}X}\right) + \cdots
= \frac{\alpha_{k}^{n + 1}}{C_k} + O\left(\left(\alpha_{k} - \delta\right)^{n}\right),$$
since 
$$P_{k}(X) - \frac{\alpha_{k}}{C_k}\left(\frac{1}{1 - \alpha_{k}X}\right)$$
has no singularity on $|X| = 1/\alpha_{k}$ so has radius of convergence $> 1/\alpha_{k}$.
Here
$$C_k = U_{k}^{'}\left(\frac{1}{\alpha_{k}}\right).$$

It now follows from standard results 
(e.g., \cite[Thm.~IV.7, p.~244]{Flajolet&Sedgewick:2009}) that 
$$ [X^n] P_k (X) = {{\alpha_k^{n+1}} \over {C_k}} + O((\alpha_k - \delta)^n)
	= \Theta(\alpha_k^n) .$$
\end{proof}

\section{Numerical results}

Here is a table giving the first few values of $P_k (n)$.

\begin{figure}[H]
\begin{center}
\begin{tabular}{cccccccccccc}
\hline
$n = $ & 0 & 1 & 2 & 3 & 4 & 5 & 6 & 7 & 8 & 9 & 10  \\
\hline
$k = 2$& 1 & 2 & 6 & 20 & 66 & 220 & 732 & 2440 & 8134 & 27124 & 90452  \\
$k = 3$& 1 & 3 & 15 & 81 & 435 & 2349 & 12681 & 68499 & 370023 & 1998945 & 10798821 \\
$k = 4$& 1 & 4 & 28 & 208 & 1540 & 11440 & 84976 & 631360 & 4690972 & 34854352 & 258971536 \\
\hline
\end{tabular}
\end{center}
\end{figure}

By truncating the power series $U_k(X)$ and solving the equation
$U_k (X) = 2$ we get better and better approximations to
$\alpha_k^{-1}$.  For example, for $k = 2$ we have
\begin{eqnarray*}
\alpha_2^{-1} &\doteq & 0.29983821359352690506155111814579603919303182364781730366339199333065202 \\
\alpha_2 & \doteq & 3.3351319300335793676678962610376244842363270634405611577104447308511860 \\
C_2 & \doteq & 6.278652437421018217684895562492005276088368718322063642652328654828673
\end{eqnarray*}

To determine an asymptotic expansion for $\alpha_k$ as $k \rightarrow \infty$,
we compute the Taylor series expansion for
$P_k(n)/P_k(n+1)$, treating $k$ as an indeterminate,
for $n$ large enough to cover the error
term desired.  For example, for $O(k^{-10})$ it suffices to take
$k = 16$, which gives
$$
\alpha_k^{-1} = {1 \over {2k}} + {1 \over {8k^2}} + {3 \over {32k^3}} + {1 \over {16k^4}} + {{27} \over {512k^5}} +  {{93} \over {2048k^6}} + 
{{83} \over {2048k^7}} + {{155} \over {4096k^8}} + {{4735} \over {131072k^9}} +
O(k^{-10}) 
$$
and hence
$$ \alpha_k = 2k - {1 \over 2} - {1 \over {4k}} - {3 \over {32k^2}} - {5 \over {64k^3}} - {{31} \over {512k^4}} - {{25} \over {512k^5}}  - {{23} \over {512 k^6}} - {{683} \over {16384k^7}} + O(k^{-8}).$$

\end{document}